
\documentclass[12pt]{amsart}
\usepackage{fullpage,url,amssymb,enumerate,colonequals}
\usepackage{mathrsfs} 

\usepackage[OT2,T1]{fontenc}
\DeclareSymbolFont{cyrletters}{OT2}{wncyr}{m}{n}
\DeclareMathSymbol{\Sha}{\mathalpha}{cyrletters}{"58}

\usepackage{color}

\newcommand{\defi}[1]{\textsf{#1}} 

\newcommand{\Aff}{\mathbb{A}}

\newcommand{\F}{\mathbb{F}}
\newcommand{\G}{\mathbb{G}}

\newcommand{\PP}{\mathbb{P}}
\newcommand{\Q}{\mathbb{Q}}
\newcommand{\R}{\mathbb{R}}
\newcommand{\Z}{\mathbb{Z}}
\newcommand{\Qbar}{{\overline{\Q}}}

\newcommand{\kbar}{{\overline{k}}}
\newcommand{\ksep}{{k_{\textup{s}}}}

\newcommand{\Xtilde}{{\widetilde{X}}}
\newcommand{\gtilde}{{\widetilde{g}}}


\newcommand{\calO}{\mathcal{O}}
\newcommand{\calP}{\mathcal{P}}

\newcommand{\calX}{\mathcal{X}}
\newcommand{\calY}{\mathcal{Y}}


\DeclareMathOperator{\Char}{char}

\DeclareMathOperator{\coarse}{coarse}

\DeclareMathOperator{\Frac}{Frac}

\DeclareMathOperator{\Gal}{Gal}

\DeclareMathOperator{\Pic}{Pic}

\DeclareMathOperator{\Proj}{Proj}

\DeclareMathOperator{\Spec}{Spec}

\newcommand{\Sch}{Sch}


\newcommand{\fppf}{{\operatorname{fppf}}}

\newcommand{\HH}{{\operatorname{H}}}


\newcommand{\intersect}{\cap} 
\newcommand{\isom}{\simeq}


\newcommand{\isomto}{\overset{\sim}{\rightarrow}}


\newtheorem{theorem}{Theorem}
\newtheorem{lemma}[theorem]{Lemma}

\newtheorem{proposition}[theorem]{Proposition}

\theoremstyle{definition}

\theoremstyle{remark}
\newtheorem{remark}[theorem]{Remark}


\usepackage{microtype}

\usepackage[
	pagebackref,
	pdfauthor={Manjul Bhargava and Bjorn Poonen}, 
	pdftitle={The local-global principle for integral points on stacky curves},
]{hyperref}
\usepackage[alphabetic,backrefs,lite]{amsrefs} 

\begin{document}

\title{The local-global principle for integral points on stacky curves}
\subjclass[2010]{Primary 11G30; Secondary 14A20, 14G25, 14H25}
\keywords{Stack, local-global principle, integral points}

\author{Manjul Bhargava}
\address{Department of Mathematics, Princeton University, Princeton, NJ 08544, USA}
\email{bhargava@math.princeton.edu}

\author{Bjorn Poonen}
\address{Department of Mathematics, Massachusetts Institute of Technology, Cambridge, MA 02139-4307, USA}
\email{poonen@math.mit.edu}
\urladdr{\url{http://math.mit.edu/~poonen/}}

\thanks{M.B.\ was supported in part by National Science Foundation grant DMS-1001828 and Simons Foundation grant \#256108.  B.P.\ was supported in part by National Science Foundation grant DMS-1601946 and Simons Foundation grants \#402472 and \#550033.}
\date{May 27, 2020}

\begin{abstract}
We construct a stacky curve 
of genus $1/2$ (i.e., Euler characteristic~$1$) over $\Z$ 
that has an $\R$-point and a $\Z_p$-point for every prime $p$
but no $\Z$-point. 
This is best possible: we also prove that any stacky curve 
of genus less than $1/2$ over 
a ring of $S$-integers of a global field 
\emph{satisfies} the local-global principle for integral points.
\end{abstract}

\maketitle

\section{Introduction}

Let $k$ be a global field, i.e., a finite extension of either $\Q$ or $\F_p(t)$.
For each nontrivial place $v$ of $k$, 
let $k_v$ be the completion of $k$ at $v$.
Let $X$ be a smooth projective geometrically integral curve of genus $g$
over $k$.
If $X$ has a $k$-point, then of course $X$ has a $k_v$-point for every~$v$.
The converse holds if $g=0$ (by the Hasse--Minkowski theorem), 
but there are well-known counterexamples of higher genus;
in fact, counterexamples exist over every global field \cite{Poonen2010-local-global}.
This motivates the question:
What is the smallest $g$ such that there exists a counterexample
of genus $g$ over some global field?
The answer is $1$.
Indeed, the first counterexample discovered was a genus 1 curve,
the smooth projective model of $2y^2=1-17x^4$ over $\Q$ \cites{Lind1940,Reichardt1942}.
In~fact, a positive proportion of genus~$1$ curves in the weighted projective space $\PP(1,1,2)$ given by
$z^2=f(x,y)$, where $f(x,y)$ is an integral binary quartic form, violate the local-global principle 
over~$\Q$~\cite{Bhargava2013hyper}.

Let us now generalize to allow $X$ to be a \emph{stacky} curve over $k$.
(See Sections \ref{S:stacks} and~\ref{S:stacky curves} for our conventions.)
Then the genus $g$ of $X$ --- defined by the formula $\chi=2-2g$, where $\chi$ is the topological Euler characteristic of $X$ --- is no longer constrained to be a natural number;
certain \emph{fractional} values are also possible.
Therefore we may now ask: What is the smallest $g$
such that there exists a stacky curve of genus $g$ over some global field $k$
violating the local-global principle? 
It turns out that if we formulate the local-global principle using
\emph{rational} points over $k$ and its completions,
then the answer is not interesting, because rational points are
almost the same as rational points on the coarse moduli space of $X$: 
see Section~\ref{S:rational points}.
Therefore we will answer our question in the context of 
a local-global principle for \emph{integral} points on a stacky curve.

Our first theorem gives a proper stacky curve of genus 1/2 over $\Z$ 
that violates the local-global principle.

\begin{theorem}
\label{T:counterexample}
Let $p,q,r$ be primes congruent to $7 \pmod 8$
such that $p$ is a square $\pmod q$ and $\pmod r$, 
and $q$ is a square $\pmod r$.
Let $f(x,y) = ax^2+bxy+cy^2$ be a positive definite integral binary
quadratic form of 
discriminant $-pqr$ such that $a$ is a nonzero square $\pmod q$ but a
nonsquare $\pmod p$ and $\pmod r$.
Let $\calY \colonequals \Proj \Z[x,y,z]/(z^2-f(x,y))$.
Define a $\mu_2$-action on $\calY$ by letting
$\lambda \in \mu_2$ act as $(x:y:z) \mapsto (x:y:\lambda z)$.
Let $\calX$ be the quotient stack $[\calY/\mu_2]$.
Then
\begin{enumerate}[\upshape (a)]
\item the genus of $\calX$ is $1/2$ $($i.e., $\chi(\calX)=1)$;
\item $\calX(\Z_\ell) \ne \emptyset$ for every rational prime $\ell$
and $\calX(\R) \ne \emptyset$;
\item $\calX(\Z)=\emptyset$, and even $\calX(\Z[1/(2pqr)])=\emptyset$. \label{smoothness}
\end{enumerate}
The same conclusions hold if instead we define $\calX$ as $[\calY/(\Z/2\Z)]$,
where $\Z/2\Z$ acts on $\calY$ through 
the nontrivial homomorphism $\Z/2\Z \to \mu_2$;
this $\calX$ is a Deligne--Mumford stack even over $\Z$.
\end{theorem}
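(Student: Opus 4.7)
My plan is to prove parts (a)--(c) using a uniform description of $\calX(R)$ valid whenever $\Pic R = 0$: a point of $\calX(R) = [\calY/\mu_2](R)$ corresponds to a tuple $(x_0, y_0, w_0, d)$ with $(x_0, y_0, w_0) \in R^3$ projectively primitive and $d \in R^\times / R^{\times 2}$, satisfying $f(x_0, y_0) = d w_0^2$. Here $d$ parameterizes the underlying $\mu_2$-torsor $\Spec R[w]/(w^2 - d)$, and the anti-invariant coordinate $z$ on $\calY$ corresponds to $w_0 w$. Over $\Z[1/2]$ the $\mu_2$- and $\Z/2\Z$-quotients coincide, so the variant version requires extra care only at $\ell = 2$ and at $\Z$ itself.

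Part (a) is a direct Euler-characteristic calculation: the projection $(x:y:z) \mapsto (x:y)$ presents $\calX_\Q \to \PP^1_\Q$ as an isomorphism away from the vanishing locus $Z(f)_\Q$ and as a $B\mu_2$-gerbe over each of the two geometric zeros of $f$, so $\chi(\calX_\Q) = \chi(\PP^1_\Q) - 2(1 - 1/2) = 1$, giving $g = 1/2$.

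For (b), I will produce local points by judicious choice of $d$. At $\R$ take $(1, 0, \sqrt{a}, 1)$. At $\Z_\ell$ with $\ell \nmid 2pqr$, the binary form $f$ is nondegenerate mod $\ell$ and represents some unit value $m$, which I match with the class of $d$ in $\Z_\ell^\times / \Z_\ell^{\times 2}$. At $\ell \in \{p, r\}$, the point $(1, 0)$ gives $f = a$, a nonsquare unit, so take $d$ to be the nonsquare class; at $\ell = q$ the same $(1, 0)$ works with $d = 1$. At $\ell = 2$, the congruence $-pqr \equiv 1 \pmod 8$ forces $b$ odd, and a parity analysis of $b^2 - 4ac$ shows one of $f(1, 0), f(0, 1), f(1, 1)$ is an odd unit $m \in \Z_2^\times$, whence $d$ in the class of $m$ works.

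The heart of the theorem is (c). A $\Z[1/(2pqr)]$-point, after scaling to clear denominators, produces a nontrivial $\Q$-point of the conic $C_d : f(x, y) = d w^2$ for some $d \in R^\times / R^{\times 2}$ with $R = \Z[1/(2pqr)]$. Positive definiteness of $f$ forces $d > 0$, so $d = 2^{\epsilon_2} p^{\epsilon_p} q^{\epsilon_q} r^{\epsilon_r}$ with $\epsilon_i \in \{0, 1\}$. The identity $4a f(x, y) = (2ax + by)^2 + pqr\, y^2$ makes $C_d$ equivalent over $\Q$ to the ternary form $\langle a, apqr, -d \rangle$, and by Hasse--Minkowski a nontrivial $\Q$-point exists iff the Hilbert symbol $(ad, -pqr)_v = 1$ at every place $v$. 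My plan is to compute $(ad, -pqr)_r$ via the standard formula at odd primes, using the given $(a/r) = -1$ and $(p/r) = (q/r) = 1$, together with $(2/r) = 1$ and $(-1/r) = -1$ (both from $r \equiv 7 \pmod 8$); this yields $(ad, -pqr)_r = -1$ regardless of $\epsilon_\bullet$, contradicting Hasse. The main obstacle is precisely this Hilbert-symbol calculation: showing that ramification at $r$ produces an obstruction independent of the $r$-, $p$-, and $q$-content of $d$; once this and the description of $\calX(R)$ are in place, (a) and (b) follow routinely.
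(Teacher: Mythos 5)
Your plan is correct, and parts (a) and (b), as well as the basic framework of (c), coincide with the paper's proof: parametrize $\calX(R)$ (for $R$ a PID) by a squareclass $d \in R^\times/R^{\times 2}$ together with a primitive solution of the twisted conic $\calY_d \colon f(x,y) = dz^2$, show the genus is $1/2$ by the $\mu_2$-quotient Euler characteristic computation, and produce local points by choosing $d$ place by place. (Your $\ell=2$ analysis via parity of $b$ is slightly more explicit than the paper's, which simply notes that $z^2-f$ reduces to a smooth conic over $\F_2$; both work.)

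Where you genuinely diverge is in the engine driving part (c). The paper, after observing positive definiteness kills $d<0$ and that $\pm 2$ acts as $\pm 1$ in every relevant squareclass group, runs an exhaustive eight-case table (one line for each divisor $s$ of $pqr$) recording at which of $\R,\Q_p,\Q_q,\Q_r$ the form $sf$ fails to represent a square. You instead package the local solubility criterion for $\calY_d$ as the Hilbert symbol condition $(ad,-pqr)_v=1$ (via the diagonalization $4af=(2ax+by)^2+pqr\,y^2$, so $\calY_d \sim \langle a, apqr, -d\rangle$) and show that the single symbol $(ad,-pqr)_r$ equals $-1$ for \emph{every} squarefree positive $d\mid 2pqr$: writing $d = 2^{\epsilon_2}p^{\epsilon_p}q^{\epsilon_q}r^{\epsilon_r}$, the contribution $(-1)^{\epsilon_r\cdot\frac{r-1}{2}}$ from the $r$-valuations cancels the $(-1)^{\epsilon_r}$ from $\bigl(\tfrac{-pq}{r}\bigr)^{\epsilon_r}$, leaving $\bigl(\tfrac{a}{r}\bigr)\bigl(\tfrac{2}{r}\bigr)^{\epsilon_2}\bigl(\tfrac{p}{r}\bigr)^{\epsilon_p}\bigl(\tfrac{q}{r}\bigr)^{\epsilon_q} = -1$ by the hypotheses $(a/r)=-1$, $(p/r)=(q/r)=(2/r)=1$. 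This is the same mathematics underneath (the paper's table entries ``not in $\Q_r$'' for every $s$ are exactly this statement), but you isolate a uniform obstruction at the single place $r$ and give a closed-form reason for it, which is a cleaner and more conceptual formulation. It also makes visible that the hypotheses involving $p$ alone (namely $(a/p)=-1$ and $(p/q)=1$) are not needed for the nonexistence argument; they serve the paper's symmetric case analysis but are redundant for yours.

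One small point to tighten when you write this up: for the $[\calY/(\Z/2\Z)]$ variant, you rightly note $\mu_2\cong\Z/2\Z$ over $\Z[1/2]$ handles everything away from $2$, but you should spell out the $\Z_2$-points of $[\calY/(\Z/2\Z)]$ separately (the trivial $\Z/2\Z$-torsor reduces this to $\calY(\Z_2)\neq\emptyset$, which you already have).
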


\begin{remark}
The hypotheses in Theorem~\ref{T:counterexample} can be satisfied.
For example, let $p=7$, $q=47$, $r=31$, and $f(x,y)=3x^2+xy+850y^2$.  
\end{remark}

\begin{remark}
The reason for considering $\Z[1/(2pqr)]$ in \eqref{smoothness} is that $\calX$ is smooth
over that base.
\end{remark}

\begin{remark}
Section~8 of \cite{Darmon-Granville1995} can be interpreted as saying
that the proper stacky curve 
\[
	\left[ \left( \Spec \frac{\Z[x,y,z]}{(x^2+29y^2-3z^3)} - \{x=y=z=0\} \right) / \G_m \right]
\]
is a similar counterexample to the local-global principle,
but of genus~$2/3$.
\end{remark}

Our second theorem shows that any stacky curve of genus less than~$1/2$ over a ring of $S$-integers of a global field 
\emph{satisfies} the local-global principle.
Let $k$ be a global field, and 
let $k_v$ denote the completion of $k$ at~$v$.
Let $S$ be a finite nonempty set of places of~$k$ containing all the archimedean places.
Let $\calO$ be the ring of $S$-integers in~$k$; that is, 
$\calO \colonequals \{x \in k : v(x) \ge 0 \textup{ for all $v \notin S$}\}$.
For each $v \notin S$, let $\calO_v$ be the completion of $\calO$ at $v$.
For each $v \in S$, let $\calO_v = k_v$.

\begin{theorem}
\label{T:teardrop}
Let $\calX$ be a stacky curve over $\calO$ of genus less than $1/2$ $($i.e., $\chi(\calX)>1$$)$.
If $\calX(\calO_v) \ne \emptyset$ for all places $v$ of $k$,
then $\calX(\calO) \ne \emptyset$.
\end{theorem}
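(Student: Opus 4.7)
The plan is to use the Euler characteristic constraint $\chi(\calX) > 1$ to sharply limit the geometry of the generic fiber $\calX_k$, and then exhibit an $\calO$-point in each resulting case. Writing
\[
\chi(\calX) = 2 - 2g_0 - \sum_i \left(1 - \tfrac{1}{e_i}\right),
\]
where $g_0$ is the genus of the coarse space of $\calX_{\kbar}$ and $e_i \ge 2$ are the orders of the geometric stacky points, the assumption $\chi(\calX) > 1$ combined with $1 - 1/e_i \ge 1/2$ forces $g_0 = 0$ together with at most one geometric stacky point. Such a point, being unique, is fixed by $\Gal(\kbar/k)$, hence $k$-rational. This leaves two cases.

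In the first case (no stacky points), $\calX = X$ is a smooth proper curve of genus $0$ over $\calO$. The local hypothesis yields $X(k_v) \ne \emptyset$ for every place $v$, so by Hasse--Minkowski $X_k \cong \PP^1_k$, and the valuative criterion of properness then promotes the resulting $k$-point to an $\calO$-point of $X$.

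In the second case (one $k$-rational stacky point $P$), the coarse space $X_k$ already has a $k$-point and so is isomorphic to $\PP^1_k$ directly, without invoking Hasse--Minkowski; by properness, $P$ extends to a section $\tilde{P} \colon \Spec \calO \to X$. Since $X$ is a smooth proper relative curve over the Dedekind base $\calO$ with generic fiber $\PP^1_k$ admitting a section, it has the form $\PP(\calE)$ for a rank-$2$ vector bundle $\calE$ on $\Spec \calO$. The section $\tilde{P}$ determines a short exact sequence $0 \to \calK \to \calE \to \calL \to 0$ of locally free sheaves on $\Spec \calO$, which splits because $\Ext^1$ of line bundles on an affine Dedekind scheme vanishes; thus $\calE \cong \calK \oplus \calL$, and the complementary projection gives a second section disjoint from $\tilde{P}$. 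Provided $\calX \setminus \tilde{P}$ coincides with $X \setminus \tilde{P}$, this second section defines an $\calO$-point of $\calX$. Notably, in this case the local hypothesis is not used: the stacky marking itself supplies the required distinguished global point.

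The main delicate point I anticipate is the verification in the second case that $\calX \setminus \tilde{P} = X \setminus \tilde{P}$, i.e., that the stacky locus of $\calX \to \Spec \calO$ is exactly the horizontal divisor $\tilde{P}$ with no additional vertical stacky components over special fibers. I expect this to follow from the paper's notion of a stacky curve over $\calO$; if vertical stackyness were permitted, I would instead lift the disjoint section at each affected prime $v$ using $\calX(\calO_v) \ne \emptyset$ together with $\Pic(\calO_v) = 0$ to trivialize any obstructing torsors locally.
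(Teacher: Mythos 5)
Your proposal rests on an assumption about the structure of $\calX$ over $\Spec\calO$ that the theorem's hypotheses do not grant, and this makes both of your cases break down. Recall the paper's definition: a stacky curve over $\calO$ is merely a separated finite-type algebraic stack over $\Spec\calO$ whose generic fiber $\calX_k$ is a stacky curve; no Deligne--Mumford, smoothness, properness, or tameness conditions are imposed on the fibers above closed points of $\Spec\calO$. So even after you establish that $\calX_k \cong \PP^1_k$ (case 1) or that $\calX_k$ has a single $k$-rational stacky point (case 2), you know essentially nothing about $\calX$ away from the generic point. In case 1 you write that $\calX$ ``is a smooth proper curve of genus $0$ over $\calO$'' and invoke the valuative criterion of properness --- but properness over $\calO$ is not given, and $\calX$ could, for instance, be $\PP^1_\calO$ minus a closed point of a special fiber. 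In case 2, the claim that $X = \PP(\calE)$ for a rank-2 bundle on $\Spec\calO$ again presupposes a smooth proper genus-0 \emph{integral model}, which is not available; and your explicit remark that ``the local hypothesis is not used'' in this case is a red flag, since without the hypothesis $\calX(\calO_v)\neq\emptyset$ one could simply delete a closed fiber and falsify the conclusion. The delicate point you flag at the end (whether the stacky locus is purely horizontal) is the tip of a much larger iceberg, and the fallback you sketch --- trivializing torsors locally via $\Pic(\calO_v)=0$ --- does not address how to glue the local integral points with a global rational point.

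A smaller gap: in positive characteristic (which is allowed, since $k$ is a global field), a $\kbar$-point fixed by the Galois action need not be $k$-rational unless its residue field is \emph{separable} over $k$. The paper handles this via Lemma~\ref{L:separable}, which uses the tameness established in Lemma~\ref{L:genus < 1/2}; your proposal skips this.

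The paper's actual argument takes precisely the opposite tack to yours: rather than trying to control $\calX$ over all of $\Spec\calO$, it uses the genus bound only to find an open substack $\Aff^1_k \subset \calX_k$, spreads this out to $\Aff^1_{\calO'} \subset \calX_{\calO'}$ over a slightly smaller base $\Spec\calO'$ (removing a finite set $S'\smallsetminus S$ of places), and then at each removed place uses the given $\calO_v$-point together with Propositions~\ref{P:A-point lifts}, \ref{P:local points on algebraic space}, and~\ref{P:U(A) is open in U(K)} to produce a nonempty open $B_v\subseteq \Aff^1(k_{v,h})$ inside the image of $\calX(\calO_{v,h})$. Strong approximation on $\Aff^1$ then produces an $\calO'$-point landing in every $B_v$, and Lemma~\ref{L:last} glues this with the local $\calO_{v,h}$-points to an $\calO$-point. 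If you want to salvage a proof along your lines, you would need to confront exactly this gluing problem head-on; the structural claims about $\calX$ over $\calO$ as stated cannot be justified.
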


\section{Stacks}
\label{S:stacks}

By a \defi{stack}, we mean an algebraic (Artin) stack $\calX$ over a scheme $S$ \cite{StacksProject}*{\href{http://stacks.math.columbia.edu/tag/026O}{Tag~026O}}.
For any object $T \in (\Sch/S)_{\fppf}$,
we write $\calX(T)$ for 
the set of isomorphism classes of $S$-morphisms $T \to \calX$,
or equivalently 
(by the $2$-Yoneda lemma \cite{StacksProject}*{\href{http://stacks.math.columbia.edu/tag/04SS}{Tag~04SS}}),
the set of isomorphism classes of the fiber category $\calX_T$.
If $T=\Spec A$, we write $\calX(A)$ for $\calX(T)$.

\section{Stacky curves}
\label{S:stacky curves}

Let $k$ be an algebraically closed field.
Let $X$ be a \defi{stacky curve} over $k$, 
i.e., a smooth separated irreducible $1$-dimensional Deligne--Mumford stack over $k$ 
containing a nonempty open substack isomorphic to a scheme.
(This definition is slightly more general than \cite{Voight-Zureick-Brown-preprint}*{Definition~5.2.1}
in that we require only separatedness instead of properness, to allow punctures.)

By the Keel--Mori theorem \cite{Keel-Mori1997} 
in the form given in \cite{Conrad2005preprint} and \cite{Olsson2016}*{Theorem~11.1.2},
$X$ has a morphism to a coarse moduli space $X_{\coarse}$ that is a smooth integral curve over $k$.
We have $X_{\coarse} = \Xtilde_{\coarse} - Z$
for some smooth projective integral curve $\Xtilde_{\coarse}$ 
and some finite set of closed points $Z$.
Moreover, by \cite{Olsson2016}*{Theorem~11.3.1}, 
each $P \in X_{\coarse}(k)$ has an \'etale neighborhood $U$ above which $X \to X_{\coarse}$
has the form $[V/G] \to U$ for some possibly ramified finite $G$-Galois cover $V \to U$ (by a scheme),
where $G$ is the stabilizer of~$X$ above~$P$.
The stacky curve $X$ is called \defi{tame above~$P$} if $\Char k \nmid |G|$, and \defi{tame} if it is tame above every~$P$. 
Let $\calP \subset X_{\coarse}(k)$ be the (finite) set above which the stabilizer is nontrivial;
then the morphism $X \to X_{\coarse}$ is an isomorphism above $X_{\coarse}-\calP$.

Let $\tilde{g}_{\coarse}$ be the genus of $\Xtilde_{\coarse}$; 
then the Euler characteristic $\chi(X_{\coarse})$ is $(2-2\gtilde_{\coarse}) -\#Z$.
We now follow \cite{Kobin-preprint} to define $\chi(X)$ and $g(X)$.
For $P$, $U$, $V$, $G$ as above, 
let $G_i \le G$ be the ramification subgroups for $V \to U$ above~$P$,
and define 
\[
    \delta_P \colonequals \sum_{i \ge 0} \frac{|G_i|-1}{|G|}
\]
(which simplifies to only the first term $(|G|-1)/|G|$ if $X$ is tame above $P$).
Then define the \defi{Euler characteristic} by 
\[
    \chi(X) \colonequals \chi(X_{\coarse}) - \sum_{P \in \calP} \delta_P.
\]
(This is motivated by the Riemann--Hurwitz formula.
See \cites{Voight-Zureick-Brown-preprint,Kobin-preprint} for other motivation.)
Finally, define the \defi{genus} $g=g(X)$ by $\chi(X)=2-2g$.

\begin{lemma}
\label{L:genus < 1/2}
Let $X$ be a stacky curve over an algebraically closed field $k$ with $g < 1/2$.
Then $X_{\coarse} \isom \PP^1$ and $\#\calP \le 1$ and $X$ is tame.
\end{lemma}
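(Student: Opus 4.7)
The plan is to deduce all three conclusions from the single inequality
\[
  \chi(X) \;=\; (2 - 2\gtilde_{\coarse}) - \#Z - \sum_{P \in \calP} \delta_P \;>\; 1,
\]
which rearranges to
\[
  2\gtilde_{\coarse} + \#Z + \sum_{P \in \calP} \delta_P \;<\; 1.
\]

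First I would establish the crude lower bound $\delta_P \ge 1/2$ for every $P \in \calP$. Since the stabilizer $G$ is nontrivial, $|G| \ge 2$; and because $k$ is algebraically closed and the \'etale neighborhood may be shrunk so that $V$ has a unique point above $P$, the decomposition group and hence the inertia group $G_0$ equals $G$. Thus the $i=0$ term of $\delta_P$ is $(|G|-1)/|G| \ge 1/2$, and all subsequent terms are non-negative. Substituting into the displayed inequality yields
\[
  2\gtilde_{\coarse} + \#Z + \tfrac{1}{2}\,\#\calP \;<\; 1.
\]
As $\gtilde_{\coarse}$, $\#Z$, and $\#\calP$ are non-negative integers, the only possibility is $\gtilde_{\coarse} = 0$, $\#Z = 0$, and $\#\calP \le 1$. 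Consequently $X_{\coarse} = \Xtilde_{\coarse} - Z \isom \PP^1$ and $\#\calP \le 1$, which proves the first two assertions.

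For tameness, the case $\calP = \emptyset$ is vacuous, so I would assume $\calP = \{P\}$; the inequality above then simplifies to $\delta_P < 1$. If $X$ were wild above $P$, then $\Char k \mid |G|$, so the wild inertia $G_1$ is nontrivial and hence $|G_1| \ge 2$. The $i=0$ and $i=1$ terms alone would then contribute
\[
  \frac{|G|-1}{|G|} + \frac{|G_1|-1}{|G|} \;\ge\; \frac{|G|-1}{|G|} + \frac{1}{|G|} \;=\; 1,
\]
so $\delta_P \ge 1$, contradicting $\delta_P < 1$. Thus $X$ is tame above $P$, hence tame everywhere.

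I do not anticipate a serious obstacle: the argument is essentially a calculation with the defining series of $\delta_P$ plus the observation that the $i=0$ term already dominates in the tame case while the $i=1$ term forces an extra $1/|G|$ in the wild case. The one subtlety worth spelling out carefully is the identification $G_0 = G$, which relies on $k$ being algebraically closed and on choosing the \'etale neighborhood so that $P$ has a unique preimage in $V$.
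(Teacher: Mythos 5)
Your proof is correct and takes essentially the same route as the paper: bound each $\delta_P$ from below by $(|G|-1)/|G| \ge 1/2$ via the $i=0$ term, substitute into the Euler characteristic formula to force $\gtilde_{\coarse}=0$, $\#Z=0$, $\#\calP\le 1$, and then observe that wildness adds an $i=1$ term contributing at least $1/|G|$, pushing $\delta_P$ to $\ge 1$ and giving a contradiction. The extra care you take to justify $G_0 = G$ (shrinking the \'etale neighborhood so $P$ has a unique preimage, over algebraically closed $k$) is a detail the paper leaves implicit, but it does not change the argument.
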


\begin{proof}
Since $g<1/2$, we have $\chi(X)>1$.
For each $P \in \calP$, note that $\delta_P \ge (|G|-1)/|G| \ge 1/2$.  Now
\[
	\chi(X) = 2-2\gtilde_{\coarse} - \#Z - \sum_{P \in \calP} \delta_P, 
\]
which is $\le 1$ if $\gtilde_{\coarse} \ge 1$ or $\#Z \ge 1$ or $\#\calP \ge 2$.
Thus $\gtilde_{\coarse}=0$, $\#Z=0$, and $\#\calP \le 1$. 
Furthermore, if $X$ is not tame, then there exists $P \in \calP$
with $\delta_P \ge (|G|-1)/|G|+1/|G|\geq 1$, which again forces $\chi(X) \le 1$, a contradiction.
\end{proof}

Now let $k$ be any field.
Let $\kbar$ be an algebraic closure of $k$, and let $\ksep$ be the separable closure of $k$ in $\kbar$.
By a \defi{stacky curve} over $k$, we mean an algebraic stack $X$ over $k$
such that the base extension $X_{\kbar}$ is a stacky curve over $\kbar$.
Define $\chi(X) \colonequals \chi(X_{\kbar})$
and $g(X) \colonequals g(X_{\kbar})$.

\begin{lemma}\label{L:separable}
If $X$ is a tame stacky curve over $k$, 
then the set $\calP \subset X_{\coarse}(\kbar)$ for $X_{\kbar}$ 
consists of points whose residue fields are separable over $k$. 
\end{lemma}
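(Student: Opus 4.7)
The plan is to prove this by showing that the coarse moduli scheme $X_{\coarse}$, viewed as a scheme over $k$, is smooth over $k$; the separability assertion then reduces to the standard fact that every closed point of a smooth $k$-scheme has residue field separable over~$k$.

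First I would invoke the Keel--Mori theorem applied directly to the $k$-stack $X$ to obtain $X_{\coarse}$ as a scheme of finite type over~$k$. Formation of the coarse moduli space of a separated Deligne--Mumford stack commutes with flat base change, so the flat extension $k \hookrightarrow \kbar$ yields a canonical isomorphism $(X_{\coarse})_{\kbar} \isom (X_{\kbar})_{\coarse}$; the right-hand side is a smooth integral curve over $\kbar$ by the structure results already recalled in Section~\ref{S:stacky curves}. Since smoothness of a finite-type $k$-scheme is equivalent to geometric regularity, this descent yields smoothness of $X_{\coarse}$ over~$k$.

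To conclude, I would use the conormal sequence
\[
\mm_{P_0}/\mm_{P_0}^2 \longrightarrow \Omega_{X_{\coarse}/k} \tensor \kappa(P_0) \longrightarrow \Omega_{\kappa(P_0)/k} \longrightarrow 0
\]
at any closed point $P_0 \in X_{\coarse}$: smoothness forces the leftmost map to be injective and makes the two left terms have the same $\kappa(P_0)$-dimension, so $\Omega_{\kappa(P_0)/k}=0$, which for a finite extension $\kappa(P_0)/k$ is equivalent to separability. Each $P \in \calP$ corresponds under the identification $X_{\coarse,\kbar}(\kbar) = X_{\coarse}(\kbar)$ to such a closed point $P_0$, giving the desired separability. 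The main potential obstacle is justifying the base-change formula $(X_{\coarse})_{\kbar} \isom (X_{\kbar})_{\coarse}$; for a separated DM stack this already follows from Keel--Mori applied to the flat extension $k \to \kbar$, though the tameness hypothesis in the lemma provides the stronger compatibility with arbitrary base change via the Abramovich--Olsson--Vistoli theory of tame stacks, and is in any case what makes the local structure $[V/G] \to U$ tame enough for the coarse space to be well-behaved over~$k$.
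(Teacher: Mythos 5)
Your approach has a fundamental gap: the claim that every closed point of a smooth $k$-scheme has residue field separable over $k$ is false, and consequently the conormal-sequence argument does not work. Take $k = \F_p(t)$ and $X_{\coarse} = \Aff^1_k = \Spec k[x]$, which is smooth over $k$; the maximal ideal $(x^p - t)$ defines a closed point $P_0$ with residue field $\kappa(P_0) = \F_p(t^{1/p})$, a purely inseparable extension of $k$. In that example the leftmost map of your conormal sequence is the zero map (the class of $x^p - t$ maps to $d(x^p - t) = p x^{p-1}\,dx = 0$), so smoothness of the ambient scheme does not force injectivity there. The correct statement is that the leftmost map is injective when the closed subscheme $\Spec \kappa(P_0) \hookrightarrow X_{\coarse}$ is itself smooth over $k$ — i.e., precisely when $\kappa(P_0)/k$ is separable — so invoking it here is circular.

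The upshot is that smoothness of $X_{\coarse}$ over $k$ cannot distinguish the points of $\calP$ from arbitrary closed points, and arbitrary closed points of a smooth $k$-curve need not be separable. You must use the stacky structure \emph{above} $P$, not just the coarse space, and tameness in a more essential way. The paper's argument does exactly this: working étale-locally, one writes $X = [V/G]$ with $V \to X_{\coarse}$ a $G$-Galois cover totally tamely ramified above $P$, so analytically locally $V$ is $y^n = \pi$ for a uniformizer $\pi$ at $P$. Passing to $\kbar$, one writes $\pi = u \bar\pi^{\,i}$ with $i$ the inseparable degree of $k(P)/k$, so $V_{\kbar}$ is locally $y^n = u\bar\pi^{\,i}$; smoothness of $V_{\kbar}$ (which is part of the definition of a stacky curve over $k$) forces $i = 1$. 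It is the smoothness of the \emph{chart} $V$ over $k$, combined with the totally ramified local model at a stacky point, that yields separability — not smoothness of the coarse space, which holds at all closed points indiscriminately.
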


\begin{proof}
Let $\bar{P} \in \calP$. 
Let $P$ be the closed point of $X_{\coarse}$ associated to $\bar{P}$.
By working \'etale locally on $X_{\coarse}$, 
we may assume that $X=[V/G]$ for a smooth curve $V$ over $k$
that is a $G$-Galois cover of $X_{\coarse}$ totally tamely ramified above $P$.
Analytically locally above $P$, the tame cover is given by the equation $y^n=\pi$
for some uniformizer $\pi$ at $P \in X_{\coarse}$.
After base change to $\kbar$, however, $\pi = u \bar{\pi}^i$, 
where $u$ is a unit, $\bar{\pi}$ is a uniformizer at $\bar{P}$, and $i$ is the inseparable degree of $k(P)/k$.
Thus $V_{\kbar}$ is analytically locally given by $y^n = u \bar{\pi}^i$.
Since $V_{\kbar}$ is smooth, $i=1$.
Thus $k(P)/k$ is separable.
\end{proof}

Next, let $\calO$ be a ring of $S$-integers in a global field $k$.
By a \defi{stacky curve} $\calX$ over $\calO$, 
we mean a separated finite-type algebraic stack over $\Spec \calO$
such that $\calX_k$ is a stacky curve.
(To be as general as possible,
we do not impose Deligne--Mumford, tameness, smoothness, 
or properness conditions
on the fibers above closed points of $\Spec \calO$.)
Define $\chi(\calX) \colonequals \chi(\calX_{\kbar})$ and $g(\calX) \colonequals g(\calX_{\kbar})$.

\section{Local-global principle for rational points}
\label{S:rational points}

We now explain 
why the local-global principle for \emph{rational} points
is not so interesting.

\begin{proposition}
\label{P:rational points}
Let $k$ be a global field.
Let $X$ be a stacky curve over $k$ with $g < 1$.
If $X(k_v) \ne \emptyset$ for all nontrivial places $v$ of $k$,
then $X(k) \ne \emptyset$.
\end{proposition}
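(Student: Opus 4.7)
The plan is to reduce to a statement about the coarse moduli space and then invoke Hasse--Minkowski.

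First, I would use $g(X)<1$ to pin down the structure of $X_{\coarse}$. Because $\chi(X_{\kbar})>0$ and each $\delta_P\ge (|G|-1)/|G|\ge 1/2$, the Euler characteristic formula
\[
\chi(X_{\kbar}) \;=\; 2-2\gtilde_{\coarse}-\#Z-\sum_{P\in\calP}\delta_P
\]
forces $\gtilde_{\coarse}=0$, so $\Xtilde_{\coarse}$ is a smooth projective geometrically integral curve of genus $0$ over $k$, i.e., a smooth conic.

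Second, I would transfer local solvability from $X$ to $\Xtilde_{\coarse}$. Composing a $k_v$-point of $X$ with the coarse moduli morphism $X\to X_{\coarse}$ and with the open immersion $X_{\coarse}\injects \Xtilde_{\coarse}$ produces a $k_v$-point of $\Xtilde_{\coarse}$ for every place $v$. By Hasse--Minkowski for conics over a global field, $\Xtilde_{\coarse}(k)\ne\emptyset$, and therefore $\Xtilde_{\coarse}\isom \PP^1_k$.

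Third, I would build a $k$-point of $X$ from a well-chosen $k$-point of $X_{\coarse}$. As recalled in Section~\ref{S:stacky curves}, $X\to X_{\coarse}$ is an isomorphism over the open substack $U \colonequals X_{\coarse}-\calP$, which is an open subscheme of $\PP^1_k$ obtained by removing the finite set $Z\cup\calP$ of closed points. Since the global field $k$ is infinite, $\PP^1(k)$ is infinite, so $U(k)$ is nonempty, and every $k$-point of $U$ lifts uniquely to a $k$-point of $X$.

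There is essentially no obstacle here, which is exactly why the paper deems the rational-points version of the local-global principle uninteresting: all the content is packaged into Hasse--Minkowski together with the observation that there are always enough non-stacky $k$-points of $\PP^1_k$ to exploit.
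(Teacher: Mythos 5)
Your proposal is correct and follows the same route as the paper's proof: use $g<1$ to force $\gtilde_{\coarse}=0$, push the local points down to the genus-$0$ coarse space, apply Hasse--Minkowski, and then pick a $k$-point of $\PP^1_k$ avoiding the finitely many stacky (and missing) points. Your version is slightly more explicit than the paper's in applying Hasse--Minkowski to the projective compactification $\Xtilde_{\coarse}$ before descending to the open subscheme $X_{\coarse}-\calP$, but this is the same argument.
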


\begin{proof}
We have $0 < \chi(X) \le 2 - 2\gtilde_{\coarse}$, so $\gtilde_{\coarse}=0$.
Thus $X_{\coarse}$ is a smooth geometrically integral curve of genus~$0$.
Because of the morphism $X \to X_{\coarse}$, 
we have $X_{\coarse}(k_v) \ne \emptyset$ for every $v$.
By the Hasse--Minkowski theorem, $X_{\coarse}(k) \ne \emptyset$,
so $X_{\coarse}$ is a dense open subscheme of $\PP^1_k$.
In particular, $X_{\coarse}(k)$ is Zariski dense in $X_{\coarse}$,
and all but finitely many of these $k$-points correspond to 
$k$-points on $X$.
\end{proof}

Because of Proposition~\ref{P:rational points},
our main theorems are concerned with the local-global principle 
for \emph{integral} points.

\section{Proof of Theorem~\ref{T:counterexample}: counterexample to the local-global principle}

(a) 
Since $(\calX_\Q)_{\coarse}$ is dominated by the genus~$0$ curve $\calY_\Q$,
we have $\gtilde_{\coarse}=0$.
The action of $\mu_2$ on $\calY_{\Qbar}$ fixes exactly two $\Qbar$-points,
namely those with $z=0$; thus $\calP=2$, and $\delta_P = 1/2$ for each $P \in \calP$.
Hence $\chi(\calX) = (2-2\cdot 0) - (1/2 + 1/2) = 1$.
(Alternatively, $\chi(\calX) = \chi(\calY)/2 = 2/2 = 1$.)

\medskip

(b)
Let $R$ be a principal ideal domain.
By definition of the quotient stack,
a morphism $\Spec R \to \calX$ 
is given by a $\mu_2$-torsor $T$ equipped with a $\mu_2$-equivariant
morphism $T \to \calY$.
The torsors are classified by 
$\HH^1_{\textup{fppf}}(R,\mu_2)$,
which is isomorphic to $R^\times/R^{\times 2}$,
since $\HH^1_{\textup{fppf}}(R,\G_m) = \Pic R = 0$.
Explicitly, if $t \in R^\times$,
the corresponding $\mu_2$-torsor is $T_t \colonequals \Spec R[u]/(u^2-t)$.
Define the twisted cover
\[
	\calY_t \colonequals \Proj R[x,y,z]/(tz^2-f(x,y))
\]
with its morphism $\pi_t \colon \calY_t \to \calX$.
To give a $\mu_2$-equivariant morphism $T_t \to \calY$
is the same as giving a morphism $\Spec R \to \calY_t$.
Thus we obtain
\[
	\calX(R) = \coprod_{t \in R^{\times}} \pi_t(\calY_t(R)).
\]

For any $\ell \notin \{p,q,r\}$, the rank~$3$ form $z^2-f(x,y)$ 
has good reduction at $\ell$,
so $\calY(\F_\ell)\ne \emptyset$, and Hensel's lemma yields
$\calY(\Z_\ell) \ne \emptyset$.
Since the discriminant of $f(x,y)$ is divisible only by $p$
and not $p^2$, the form is not identically $0$ modulo~$p$,
so there exist $\bar{a},\bar{b} \in \F_p$ 
with $f(\bar{a},\bar{b}) \in \F_p^\times$.
Lift $\bar{a},\bar{b}$ to $a,b \in \Z_p$,
so $f(a,b) \in \Z_p^\times$.
Then $\calY_{f(a,b)}(\Z_p) \ne \emptyset$.  
The same argument applies at $q$ and $r$.
Since $f$ is positive definite, $\calY(\R) \ne \emptyset$.
Thus $\calX(\Z_\ell) \ne \emptyset$ for all primes~$\ell$,
and $\calX(\R) \ne \emptyset$.

\medskip

(c)
We now show that $\calX(\Z[1/(2pqr)])=\emptyset$, i.e.,
that $\calY_t(\Z[1/(2pqr)]) = \emptyset$ for all $t \in \Z[1/(2pqr)]^\times$, 
or equivalently, 
that the quadratic form $f(x,y)$ does not represent 
any element of $\Z[1/(2pqr)]^\times$ times a square in $\Z[1/(2pqr)]$.

Completing the square shows that $f$ is equivalent over $\Q$
to the diagonal form $[a,apqr]$.
If we use $u=u_v$ to denote a unit nonresidue in $\Z_v$, then

\begin{itemize}
\item over $\Q_p$, the form $f$ is equivalent to $[u,up]$ and represents
  the squareclasses $u,up$;

\item over $\Q_q$, the form $f$ is equivalent to $[1,uq]$ and
  represents the squareclasses $1,uq$;

\item over $\Q_r$, the form $f$ is equivalent to $[u,ur]$ and
  represents the squareclasses $u,ur$.
\end{itemize}
Therefore, 
\begin{itemize}
\item $f$ takes square values in $\R$ and $\Q_q$, but not in $\Q_p$ and $\Q_r$.
\item $-f$ takes square values in $\Q_p$ and $\Q_r$, but not in $\R$ and $\Q_q$.
\end{itemize}
It follows that $f$ and $-f$ together represent squares
locally at all places, but do not globally represent squares. 

We now further check that $sf$, for {\it every} factor $s$ of $pqr$,
fails to globally represent a square (by quadratic
reciprocity, $r$ is not a square $\pmod p$ and $\pmod q$, and $q$ is not
a square $\pmod p$):

\begin{itemize}
\item $pf$ takes square values in $\R$ and $\Q_q$, but not in $\Q_p$ and $\Q_r$.

\item $qf$ takes square values in $\R$ and $\Q_p$, but not in $\Q_q$ and $\Q_r$.

\item $rf$ takes square values in $\R$ and $\Q_p$, but not in $\Q_q$ and $\Q_r$.

\item $pqf$ takes square values in $\R$ and $\Q_p$, but not in $\Q_q$ and $\Q_r$.

\item $prf$ takes square values in $\R$ and $\Q_p$, but not in $\Q_q$ and $\Q_r$.

\item $qrf$ takes square values in $\R$ and $\Q_q$, but not in $\Q_p$ and $\Q_r$.

\item $pqrf$ takes square values in $\R$ and $\Q_q$, but not in $\Q_p$ and $\Q_r$.
\end{itemize}
Since $2$ is a square in $\R$, $\Q_p$, $\Q_q$, and $\Q_r$, multiplying
each of the $sf$'s in the above statements by 2 would not
change the truth of any these statements.  Meanwhile, since $-1$ and
$-2$ are nonsquares in $\R$, $\Q_p$, $\Q_q$, and $\Q_r$, 
multiplying the $sf$'s in the statements above by $-1$ or $-2$ would
simply reverse all the conditions (in particular, all would fail
to represent squares in~$\R$).  

We conclude that $\calY_t(\Z[1/(2pqr)]) = \emptyset$ for all $t \in
\Z[1/(2pqr)]^\times$, i.e., $\calX(\Z[1/(2pqr)])=\emptyset$, as~claimed.

The same arguments apply to $\calX' \colonequals [\calY/(\Z/2\Z)]$;
in particular, 
\[
    \calX'(\Z[1/(2pqr)]) = \calX(\Z[1/(2pqr)]) = \emptyset,
\]
because the homomorphism $\Z/2\Z \to \mu_2$ is an isomorphism 
over $\Z[1/2]$ and hence over $\Z[1/(2pqr)]$.

\section{Stacks over local rings}

This section contains some results to be used in 
the proof of Theorem~\ref{T:teardrop}.

\begin{proposition}
\label{P:A-point lifts}
Let $A$ be a noetherian local ring.
Let $X$ be an algebraic stack of finite type over $A$.
Let $x \in X(A)$.
Then there exists a finite-type algebraic space $U$ over $A$,
a smooth surjective morphism $f \colon U \to X$, 
and an element $u \in U(A)$ such that $f(u) = x$.
\end{proposition}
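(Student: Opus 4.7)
The plan is to begin with a smooth surjective morphism $p \colon V \to X$ from an algebraic space $V$ of finite type over $A$, available because $X$ is an algebraic stack of finite type over $A$.  Base-changing $p$ along $x$ produces $W \colonequals V \times_X \Spec A$, an algebraic space of finite type over $\Spec A$ whose structure morphism $q \colon W \to \Spec A$ is smooth and surjective.  If $q$ admits a section $\sigma$, then $u \colonequals (\Spec A \xrightarrow{\sigma} W \to V)$ lies in $V(A)$ and satisfies $p(u) = x$ by the fiber-product square, so we may take $U \colonequals V$ to finish.

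In general $q$ need not split, since smooth surjective morphisms to a local affine base do not always admit sections.  To build $U$, I plan to use that smooth morphisms admit sections \'etale-locally on the base: at a chosen closed point $w_0 \in W$ above the closed point of $\Spec A$, there exist an elementary \'etale neighborhood $\Spec A' \to \Spec A$ and a section $\sigma' \colon \Spec A' \to W_{A'}$ of $W_{A'} \to \Spec A'$ through $w_0$.  I then assemble $U$ as an algebraic-space quotient of $V \sqcup \Spec A$ by an \'etale equivalence relation that identifies the copy of $\Spec A'$ embedded into $V$ (via $\sigma'$ composed with $W_{A'} \to V$) with the copy of $\Spec A'$ embedded into $\Spec A$ (via the \'etale map); the tautological inclusion $\Spec A \hookrightarrow U$ then provides the required $u$, and the morphism $f \colon U \to X$ is defined to restrict to $p$ on $V$ and to $x$ on $\Spec A$, these being compatible on $\Spec A'$ by the definition of $\sigma'$ as a factorization of $x|_{\Spec A'}$ through $V$.

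The main obstacle will be verifying that this construction produces a bona fide finite-type algebraic space $U$ and that the induced morphism $f$ is smooth and surjective.  Smoothness of $f$ on the $\Spec A$-piece is the delicate point, because $x \colon \Spec A \to X$ need not itself be smooth; the key will be that, by construction, an \'etale neighborhood of this piece of $U$ (namely $\Spec A'$) maps via $\sigma'$ into the smooth atlas $V$, so that smoothness of $f$ at these points reduces \'etale-locally to the smoothness of $p$.  The use of algebraic spaces (rather than schemes) is essential, because the relevant quotient by an \'etale equivalence relation is in general not representable by a scheme; its existence as an algebraic space is a standard foundational result.
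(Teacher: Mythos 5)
The setup (take a smooth atlas $V \to X$, pull back along $x$ to get a smooth algebraic space $W \to \Spec A$, and extract an \'etale local section $\sigma' \colon \Spec A' \to W_{A'}$) matches the first half of the paper's proof. The divergence, and the gap, is in how you assemble $U$.

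You propose to realize $U$ as the quotient of $V \sqcup \Spec A$ by ``an \'etale equivalence relation'' generated by identifying the two maps out of $\Spec A'$, namely $\Spec A' \to V$ (via $\sigma'$ followed by the projection $W_{A'} \to V$) and $\Spec A' \to \Spec A$. For this to yield an algebraic space in the standard way, both projections of the resulting relation must be \'etale. The projection landing in $\Spec A$ is fine, but the map $\Spec A' \to V$ is \emph{not} \'etale in general: it is the composite of the regular immersion $\sigma'$, the \'etale map $W_{A'} \to W$, and the projection $W \to V$, and this last map is a base change of the arbitrary morphism $x \colon \Spec A \to X$, which carries no smoothness or \'etaleness. (Already for $X$ a scheme and $V = X$ the identity atlas, $\Spec A' \to V$ is just $x$ itself, which can be a constant map.) So the proposed $R \rightrightarrows V \sqcup \Spec A$ is not an \'etale equivalence relation, the quotient is not an algebraic space by the usual criterion, and moreover neither $\Spec A' \to V$ nor $\Spec A' \to \Spec A$ is a monomorphism, so the word ``embedded'' is already not accurate. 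Your own worry about smoothness of $f$ on the $\Spec A$-piece is in fact symptomatic of this deeper problem: the whole gluing is not available.

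The paper gets around exactly this difficulty by a Galois twisting construction rather than a gluing. After finding a Galois \'etale extension $A'/A$ with group $G$ over which $x$ lifts to $v \in V(A')$ together with descent data (a compatible system of isomorphisms between the conjugates of $v$), one forms the $n$-fold $2$-fiber product $V^n_X$ over $X$ with $n = \#G$, indexed by $G$. Then $G$ acts on $V^n_X$ by permuting the factors (compatibly with the map to $X$) and on $\Spec A'$ as the Galois group, and one defines $U$ as the twist of $V^n_X$ by this data, i.e.\ the quotient of $V^n_X \times_A A'$ by the twisted diagonal $G$-action. After base change to $A'$ this recovers $V^n_X$, which is smooth and surjective over $X$, so $U \to X$ is smooth and surjective; and the tuple of conjugates of $v$, equipped with the descent isomorphisms, descends to the required element $u \in U(A)$. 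If you want to repair your argument, this twisting step is the missing ingredient; the \'etale-local-section step you already have is correct input to it.
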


\begin{proof}
By definition,
there exists a finite-type $A$-scheme $V$ 
and a smooth surjective morphism $V \to X$.
Taking the $2$-fiber product with $\Spec A \stackrel{x}\to X$
yields an algebraic space $V_x \to \Spec A$.
Then $V_x \to \Spec A$ is smooth, so it admits \'etale local sections.
Thus we can find a Galois \'etale extension $A'$ of $A$,
say with group $G$, 
such that $x$ lifts to a morphism $\Spec A' \stackrel{v}\to V$
equipped with a compatible system of isomorphisms between 
the conjugates of $v$.

Let $n=\#G$.
Let $V^n_X$ be the $2$-fiber product over $X$ of $n$ copies of $V$,
indexed by $G$.
The left translation action of $G$ on $G$
induces a right $G$-action on $V^n_X$ respecting the morphism $V^n_X \to X$,
and there is also a right $G$-action on $\Spec A'$.
Therefore we may twist $V^n_X$ to obtain 
a new algebraic space $U$ lying over $X$
(a quotient of $V^n_X \times_A A'$ by a twisted action of $G$)
such that the element of $V^n_X(A')$ given by the conjugates of $v$
and the isomorphisms between them descends to an element of $U(A)$.
\end{proof}

\begin{remark}
Atticus Christensen, combining a variant of our proof with other arguments,
has extended Proposition~\ref{P:A-point lifts} to other rings $A$,
such as arbitrary products of complete noetherian local rings,
and ad\`ele rings of global fields \cite{Christensen2020-thesis}*{Theorem~7.0.7 and Propositions~12.0.5 and~12.0.8}.
\end{remark}

For any valued field $K$, let $\widehat{K}$ denote its completion.

\begin{proposition}
\label{P:local points on algebraic space}
Let $A$ be an excellent henselian discrete valuation ring.
Let $K=\Frac A$.
Let $U$ be a separated finite-type algebraic space over $K$.
\begin{enumerate}[\upshape (a)]
\item \label{I:manifold}
The set $U(K)$ has a topology inherited from the topology on $K$.
\item \label{I:Zariski dense}
If $U$ is smooth and irreducible, 
then any nonempty open subset of $U(K)$ is Zariski dense in $U$.
\end{enumerate}
\end{proposition}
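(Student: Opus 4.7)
The strategy is to define the topology in (a) via an étale presentation of $U$ by a scheme, and to deduce (b) from smoothness, using Hensel's lemma and the elementary fact that an analytic open subset of $K^d$ is Zariski dense in $\Aff^d_K$.

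For (a), choose an étale surjection $\pi \colon V \to U$ from a finite-type $K$-scheme.  For each $x \in U(K)$, the fiber $V \times_U \Spec K$ is étale over $\Spec K$ and hence a disjoint union of spectra of finite separable extensions of $K$; choose one component $\Spec L$ and the resulting lift $v_x \in V(L)$.  Since $A$ is excellent henselian, the integral closure of $A$ in $L$ is again a (product of) excellent henselian DVR(s) with fraction field $L$, so $L$ inherits a canonical valuation topology, and $V(L)$ inherits a topology by Zariski-locally embedding $V$ into affine space over $L$.  I declare $W \subset U(K)$ to be a neighborhood of $x$ iff $W \supset \pi(\Omega) \cap U(K)$ for some open $\Omega \ni v_x$ in $V(L)$.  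Independence of the choices of $\pi$, $L$, and $v_x$ follows from the facts that any two étale atlases admit a common refinement and that étale morphisms between smooth schemes induce local homeomorphisms on valued points, by Hensel's lemma for the henselian valued field $L$.

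For (b), let $W \subset U(K)$ be nonempty open with $x \in W$ and put $d \colonequals \dim U$.  Using smoothness of $U$, I take the atlas $V \to U$ with $V$ smooth of dimension $d$, and (since $V$ is smooth) shrink $V$ near the image of $v_x$ so as to produce an étale $K$-morphism $\phi \colon V \to \Aff^d_K$.  A structural result about smooth algebraic spaces at $K$-points allows me to refine the atlas further so that $v_x \in V(K)$, whence $\phi(v_x) \in K^d$; Hensel's lemma applied to both $\phi$ and $\pi$ then realizes $\pi \colon V(K) \to U(K)$ and $\phi \colon V(K) \to K^d$ as local homeomorphisms near $v_x$ in the $K$-analytic topology.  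Suppose for contradiction that the Zariski closure $Z \subset U$ of $W$ is proper; irreducibility forces $\dim Z < d$, and then $\phi(\pi^{-1}(Z) \cap V)$ is contained in a proper Zariski closed subset $T \subsetneq \Aff^d_K$.  Combining the two local homeomorphisms with openness of $W$ identifies an open neighborhood of $x$ in $U(K)$ with an analytic open neighborhood of $\phi(v_x)$ in $K^d$ contained in $T(K)$, contradicting the fact that an analytic open subset of $K^d$ is Zariski dense in $\Aff^d_K$.

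The main obstacle is the structural claim invoked in (b): that a smooth separated algebraic space admits, at every $K$-point, an étale atlas by a scheme with a $K$-rational lift.  For smooth algebraic spaces this can be established by noting that the strict henselization of the local ring at a $K$-point is the henselization of a smooth local $K$-algebra with residue field $K$, and then applying Artin approximation (or an argument specific to separated algebraic spaces) to produce an honest étale scheme neighborhood with a $K$-rational lift.  This smoothness-dependent input is the reason part (b) hypothesizes smoothness while part (a) does not.
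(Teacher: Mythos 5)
Your route is genuinely different from the paper's. The paper defines the topology on $U(K)$ by invoking the Conrad--Temkin theorem that a separated finite-type algebraic space over a complete nonarchimedean field has a rigid-analytic analytification, and for non-complete $K$ simply takes the subspace topology from $U(\widehat{K})$; part (b) then reduces to the observation that a nonzero power series in $n$ variables cannot vanish on a nonempty open subset of $K^n$, together with Artin approximation to descend from $\widehat{K}$-points to $K$-points. You instead build the topology by hand from an \'etale atlas and deduce (b) from Hensel's lemma and an \'etale chart $V \to \Aff^d_K$. Both routes can be made to work; the analytification route lets the paper skip the bookkeeping you take on.

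That said, there are two places where your write-up leaves genuine work undone. First, in (a), well-definedness of the neighborhood system (and the verification that it actually satisfies the topology axioms) is asserted but not proved: a given $x \in U(K)$ has lifts in $V(L)$ over potentially several different finite separable $L/K$, so one must check that the resulting filters agree, and moreover that a proposed neighborhood $\pi(\Omega)\cap U(K)$ of $x$ is also a neighborhood of each of \emph{its} points $y$, whose lifts may live over a different $L'$. Second, in (b), the structural input you correctly flag is the statement that a separated (hence quasi-separated, hence \emph{decent}) algebraic space admits elementary \'etale neighborhoods: for $x\in U(K)$ the residue field of the underlying point is $K$, and there is a scheme $V$ \'etale over $U$ with a point $v$ lying over $x$ with $\kappa(v)=K$. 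That is a real theorem, but your sketch of its proof conflates the henselization (residue field $K$) with the strict henselization (residue field $K^{\mathrm{sep}}$), and Artin approximation is not the tool that produces the elementary \'etale neighborhood --- decency of the algebraic space is. With those two points repaired, your argument would stand as a more elementary alternative to the paper's analytification-based proof.
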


\begin{proof}
\hfill
\begin{enumerate}[\upshape (a)]
\item 
In fact, much more is true:
if $K=\widehat{K}$, then the analytification of $U$ exists 
as a rigid analytic space \cite{Conrad-Temkin2009}*{Theorem~1.2.1}.
If $K \ne \widehat{K}$, equip $U(K)$ with 
the subspace topology inherited from $U(\widehat{K})$.
\item
If $K=\widehat{K}$,
this follows from the fact that a nonzero power series in $n$ variables
over $K$ cannot vanish on a nonempty open subset of $K^n$.
If $K \ne \widehat{K}$, use Artin approximation: 
any point of $U(\widehat{K})$ can be approximated by a point of $U(K)$.\qedhere
\end{enumerate}
\end{proof}

\begin{proposition}
\label{P:U(A) is open in U(K)}
Let $A$ be an excellent henselian discrete valuation ring.
Let $K=\Frac A$.
Let $U$ be a separated finite-type algebraic space over $A$.
Then $U(A)$ is an open subset of $U(K)$.
\end{proposition}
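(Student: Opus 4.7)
The plan is to handle the scheme case by a direct coordinate calculation and then reduce the general algebraic-space case to it via an étale scheme cover, using the henselian property of $A$ to lift the $A$-point and the local-homeomorphism behavior of étale morphisms on $K$-points to transfer the open neighborhood.

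\textbf{Scheme case.}  Suppose first that $U$ is a scheme.  Since $\Spec A$ is local, $u\colon \Spec A \to U$ factors through any affine open $\Spec B \hookrightarrow U$ containing the image of the closed point.  Writing $B = A[t_1,\dots,t_n]/I$, one has the identification
\[
    \Spec B(A) \;=\; \Spec B(K) \cap A^n \;\subset\; K^n.
\]
The valuation ring $A$ is open in $K$, so $A^n$ is open in $K^n$; therefore $\Spec B(A)$ is open in $\Spec B(K)$.  An open immersion of separated finite-type schemes induces an open embedding on $K$-points in the topology of Proposition~\ref{P:local points on algebraic space}, so $\Spec B(A)$ is an open neighborhood of $u$ in $U(K)$ contained in $U(A)$.

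\textbf{Reduction for algebraic spaces.}  In general, let $u_0 \in U$ be the image of the closed point, with residue field $k$.  The étale-local structure of algebraic spaces produces an étale morphism $V \to U$ from a scheme $V$ together with a point $v_0 \in V(k)$ mapping to $u_0$ with $k(v_0) = k$.  The pullback $V \times_U \Spec A$ is an étale algebraic space over $\Spec A$ having $v_0$ as a $k$-rational point of its closed fiber, so after passing to a scheme étale neighborhood of $v_0$ and applying the classical henselian lifting property we obtain a section $\Spec A \to V \times_U \Spec A$ through $v_0$, hence a lift $v \in V(A)$ of $u$.  By the scheme case, there is an open neighborhood $\Omega \subset V(K)$ of $v$ with $\Omega \subset V(A)$; its image in $U(K)$ is a neighborhood of $u$ contained in $U(A)$, provided $V(K) \to U(K)$ is open near $v$.

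\textbf{Main obstacle.}  The delicate step is verifying that an étale morphism of separated finite-type algebraic spaces over $K$ induces a local homeomorphism on $K$-points.  The plan is to deduce this from the fact that the analytification of an étale morphism is étale as a morphism of rigid analytic spaces over $\widehat K$ and hence a local isomorphism on $\widehat K$-points, then to descend to $K$-points using the subspace topology of Proposition~\ref{P:local points on algebraic space}\eqref{I:manifold} together with Artin approximation, as in the proof of Proposition~\ref{P:local points on algebraic space}\eqref{I:Zariski dense} when $K \ne \widehat K$.
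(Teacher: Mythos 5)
Your proposal is correct in outline but takes a genuinely different reduction from the paper, and it leans on a nontrivial structure result that the paper deliberately sidesteps. The paper chooses \emph{any} étale scheme cover $V \to U$, lifts $u \in U(A)$ only to $v \in V(A')$ for some finite \'etale $A$-algebra $A'$ (which is automatic from henselianness), proves openness over $K' = \Frac A'$, and then descends via the fppf sheaf property: $u_1 \in U(A') \cap U(K) = U(A)$. You instead choose $V$ adapted to $u$ (an elementary \'etale neighborhood) so that $u$ lifts directly to $v \in V(A)$, avoiding $A'$. This works, but it invokes the fact that a decent (in particular, separated) algebraic space admits elementary \'etale neighborhoods at every point with no residue field extension — a real theorem that you gloss over as ``the \'etale-local structure of algebraic spaces'' and should cite (Stacks Project, Decent Algebraic Spaces); the paper's $A'$-plus-fppf-descent route needs nothing of the sort. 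Two smaller imprecisions: (i) the residue field of $u_0 \in |U|$ need not equal $k$ — it is a subfield $k(u_0) \hookrightarrow k$, and what you actually want is $k(v_0) = k(u_0)$, which still yields a $k$-rational point of the closed fiber of $V \times_U \Spec A$ after base change; (ii) in the ``main obstacle,'' Artin approximation is not quite the right tool for restricting the $\widehat K$-analytic local homeomorphism to $K$-points. The paper's argument there is different: a $\widehat{K}$-point of $V$ mapping to a $K$-point of $U$ is automatically a $K$-point of $V$, because $V \to U$ is locally quasi-finite and $K$ is algebraically closed in $\widehat{K}$ (excellence + henselianness). Your explicit coordinate treatment of the scheme case, which the paper takes as known, is a nice addition. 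With the citation added and points (i)–(ii) repaired, the proof goes through as a legitimate alternative.
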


\begin{proof}
Since $U$ is separated over $A$, the map $U(A) \to U(K)$ is injective.
Let $u \in U(A)$.
Choose a separated $A$-scheme $V$
with an \'etale surjective morphism $f \colon V \to U$.
Then $u$ lifts to some $v \in V(A')$ for some finite \'etale $A$-algebra $A'$.
Let $K'=\Frac A'$.
Since $V$ is a separated $A$-scheme,
$V(A')$ is an open subset of $V(K')$.
If $A$ is complete, then 
the \'etale morphism $V \to U$ induces an \'etale morphism of 
analytifications \cite{Conrad-Temkin2009}*{Theorem~2.3.1},
so $V(K') \to U(K')$ is a local homeomorphism;
in particular, it defines a homeomorphism 
from a neighborhood $N_V$ of $v$ in $V(K')$ 
to a neighborhood $N_U$ of $u$ in $U(K')$,
and we may assume that $N_V \subseteq V(A')$.
In the general case, a given point of $V(\widehat{K'})$ maps to some point
of $U(K')$ if and only if it is in $V(K')$,
so the homeomorphism for $\widehat{K'}$-points
restricts to a homeomorphism for $K'$-points,
which we again denote $N_V \isomto N_U$.
If $u_1 \in N_U \intersect U(K)$,
then $u_1$ lies in the image of $N_V \subseteq V(A')$,
so $u_1 \in U(A')$;
now $u_1 \in U(A') \intersect U(K)$, which is $U(A)$
since $U$ is a sheaf on $(\Spec A)_{\fppf}$.
Hence $U(A)$ is open in $U(K)$.
\end{proof}

\section{Proof of Theorem~\ref{T:teardrop}}

By Lemma~\ref{L:genus < 1/2}, we have
$(\calX_\kbar)_{\coarse}\isom \PP^1_\kbar$, and hence $(\calX_k)_{\coarse}$ is a smooth proper curve of genus~$0$. 
Since $\calX$ has an $\calO_v$-point for every $v$,
the stack $\calX_k$ has a $k_v$-point for every $v$,
so $(\calX_k)_{\coarse}$ has a $k_v$-point for every $v$. Thus $(\calX_k)_{\coarse} \isom \PP^1_k$.

If $\calX_k \to (\calX_k)_{\coarse}$ is not an isomorphism,
then by Lemma~\ref{L:genus < 1/2}, there is a unique $\kbar$-point 
above which it fails to be an isomorphism,
and by Lemma~\ref{L:separable}, it is a $\ksep$-point, 
and that point must be $\Gal(\ksep/k)$-stable, hence a $k$-point of $\PP^1$,
which we may assume is $\infty$.
Thus $\calX_k$ contains an open substack isomorphic to $\Aff^1_k$.

Since all the stacks are of finite presentation,
the isomorphism just constructed extends 
above some affine open neighborhood of the generic point in $\Spec \calO$.
That is, there exists a finite set of places $S' \supseteq S$ 
such that if $\calO'$ is the ring of $S'$-integers in $k$,
then the stack $\calX_{\calO'}$ contains 
an open substack isomorphic to $\Aff^1_{\calO'}$.

Let $v \in S'-S$.
Let $\calO_{(v)}$ be the localization of $\calO$ at $v$,
and let $\calO_{v,h}$ be its henselization in $\calO_v$,
so $\calO_{v,h}$ is the set of elements of $\calO_v$ 
that are algebraic over $k$.
Let $k_{v,h} = \Frac \calO_{v,h}$.
We are given $x \in \calX(\calO_v)$.
Let $U$, $f$, and $u$ be as in Proposition~\ref{P:A-point lifts}
with $A=\calO_v$.
By Proposition~\ref{P:U(A) is open in U(K)},
$U(\calO_v)$ is open in $U(k_v)$.
Let $U_0$ be the connected component of $U_{k_v}$ containing $u$,
so $U_0(k_v)$ is open in $U(k_v)$.
The morphisms $U_0 \to U_{k_v} \to \calX_{k_v} \to \Spec k_v$ are smooth, 
so $U_0$ is smooth and irreducible. Therefore, by 
Proposition~\ref{P:local points on algebraic space}\eqref{I:Zariski dense},
the set $U(\calO_v) \intersect U_0(k_v)$ is Zariski dense in~$U_0$.
On the other hand, $U_0$ dominates $\calX_{k_v}$ since 
$U_0 \to \calX_{k_v}$ is smooth and $\calX_{k_v}$ is irreducible.
By the previous two sentences,
there exists $u_0 \in U(\calO_v) \intersect U_0(k_v)$ 
mapping into the subset $\Aff^1(k_v)$ of $\calX(k_v)$.
By Artin approximation, we may replace $u_0$ by a nearby point
to assume also that $u_0 \in U(\calO_{v,h})$.

Let $U_1$ be the inverse image of $\Aff^1_{k_{v,h}}$ 
under $U_{k_{v,h}} \to \calX_{k_{v,h}}$.
By Proposition~\ref{P:U(A) is open in U(K)}, 
$U(\calO_{v,h})$ is open in $U(k_{v,h})$,
so $U(\calO_{v,h}) \intersect U_1(k_{v,h})$ 
is an open neighborhood of $u_0$ in $U_1(k_{v,h})$.
Since $U_1 \to \Aff^1_{k_{v,h}}$ is smooth, 
the image of this neighborhood is a
nonempty open subset $B_v$ of $\Aff^1(k_{v,h})$.
By construction, $B_v$ is contained in the image of
$U(\calO_{v,h}) \to \calX(\calO_{v,h}) \subseteq \calX(k_{v,h})$,
so $B_v \subseteq \calX(\calO_{v,h})$.

By strong approximation, there exists $x \in \Aff^1(\calO')$
such that $x \in B_v$ for all $v \in S'-S$.
For each $v \in S'-S$, since $B_v \subseteq \calX(\calO_{v,h})$, 
there exists $x_v \in \calX(\calO_{v,h})$
such that $x$ and $x_v$ become equal in $\calX(k_{v,h})$.
Finally, the following lemma shows that $x$ comes from 
an element of $\calX(\calO)$.

\begin{lemma}
\label{L:last}
If $x \in \calX(\calO')$
and $x_v \in \calX(\calO_{v,h})$ for each $v \in S'-S$
are such that the images of $x$ and $x_v$ in $\calX(k_{v,h})$ are equal
for every $v \in S'-S$,
then there exists an element of $\calX(\calO)$
mapping to $x$ in $\calX(\calO')$
and to $x_v$ in $\calX(\calO_{v,h})$ for each $v \in S'-S$.
\end{lemma}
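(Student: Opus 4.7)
The lemma is a patching (Mayer--Vietoris) statement for algebraic stacks: $\Spec\calO$ is glued from the open $\Spec\calO'$ and the henselizations $\Spec\calO_{v,h}$ for $v\in S'-S$, along the overlaps $\Spec k_{v,h}$. The plan is to spread the local data $(x,\{x_v\})$ to a genuine surjective étale cover of $\Spec\calO$ and then invoke effectivity of étale descent for the algebraic stack $\calX$.

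\emph{Step 1: spread each $x_v$.} Since $\calO_{v,h}$ is the filtered colimit of pointed étale neighborhoods of $(\calO_{(v)},\mm_v)$ with trivial residue extension, and $\calX$ is of finite presentation over $\calO$, the element $x_v$ descends to some $y_v\in\calX(B_v)$ for a connected, finite-type étale $\calO$-algebra $B_v$ with distinguished closed point $p_v$ above $v$. By Zariski-shrinking $\Spec B_v$ around $p_v$, we may arrange that its image in $\Spec\calO$ is contained in $\Spec\calO'\cup\{v\}$, so in particular avoids the other closed points of $S'-S$; then $U\colonequals\Spec\calO'\sqcup\bigsqcup_v\Spec B_v\to\Spec\calO$ is surjective étale.

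\emph{Step 2: spread each isomorphism.} An isomorphism $x|_{k_{v,h}}\isom y_v|_{k_{v,h}}$ is a $k_{v,h}$-point of the finite-presentation algebraic space $\operatorname{Isom}(x,y_v)$ over $\Spec(\calO'\otimes_\calO B_v)$. Since $k_{v,h}$ is the filtered colimit of the generic-fiber fraction fields $\Frac(\calO'\otimes B_v)$ over the system of pointed étale neighborhoods, after refining $B_v$ further (still an étale neighborhood of $v$ with its distinguished point $p_v$) the section lives over the generic point of $\Spec(\calO'\otimes B_v)$. Spreading to a dense open, and then Zariski-shrinking $\Spec B_v$ by deleting only closed points distinct from $p_v$, we obtain an isomorphism $\sigma_v\colon x|_{\calO'\otimes B_v}\isomto y_v|_{\calO'\otimes B_v}$ defined on the full overlap. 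This final shrinking is licit because $p_v$ does not lie in the overlap $\Spec(\calO'\otimes B_v)$ (as $v$ is inverted in $\calO'$), so closed points of the overlap can be removed without disturbing $p_v\in\Spec B_v$.

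\emph{Step 3: descent, and main obstacle.} For distinct $v,w\in S'-S$, the image constraint from Step~1 forces $\Spec(B_v\otimes_\calO B_w)$ to lie entirely over $\Spec\calO'$; on this overlap we take $\sigma_w^{-1}\circ\sigma_v$ as the gluing isomorphism between $y_v$ and $y_w$. The cocycle condition on any triple overlap involving at least two distinct $B$-factors is automatic, as such overlaps lie over $\Spec\calO'$ and all gluing isomorphisms factor through the common object $x$. By effectivity of étale descent for the algebraic stack $\calX$, the descent datum on $U\to\Spec\calO$ produces an element $z\in\calX(\calO)$; by construction $z|_{\calO'}=x$ and $z|_{B_v}=y_v$ restricts along $B_v\to\calO_{v,h}$ to $x_v$. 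The delicate point is Step~2: realizing the given isomorphism, initially available only over the huge field $k_{v,h}$, on the entire overlap $\Spec(\calO'\otimes B_v)$ while preserving the distinguished point $p_v$ needed to connect back to $\calO_{v,h}$. The remaining cocycle verification and descent step are then formal.
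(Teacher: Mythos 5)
Your proposal takes essentially the same route as the paper: spread out the local data over a flat cover of $\Spec\calO$ and invoke descent for the stack $\calX$. The paper's version is much terser---it uses arbitrary finitely generated $\calO$-subalgebras $A_v\subseteq\calO_{v,h}$ as the pieces of an fppf covering and leaves the construction of the descent data implicit---whereas you use pointed \'etale neighborhoods $B_v$ and are explicit about spreading the isomorphisms and checking cocycles via the $\operatorname{Isom}$ algebraic space and limit arguments. Your extra care is worthwhile, and the image constraint in Step~1 (making $\Spec B_v$ avoid the other places of $S'-S$) is precisely what makes the cocycle verification clean for distinct pairs.

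There is, however, a gap in Step~3: the cover $U=\Spec\calO'\sqcup\bigsqcup_v\Spec B_v\to\Spec\calO$ also has \emph{self}-fiber-products $\Spec(B_v\otimes_\calO B_v)$, and since $\Spec B_v$ may have several points above~$v$, these self-overlaps can have components lying over~$v$ (hence not over $\Spec\calO'$) and off the diagonal. You only specify the gluing isomorphism for distinct $v,w$; the descent datum on the off-diagonal part of $\Spec(B_v\otimes B_v)$ over $v$ is not addressed, nor are the associated triple overlaps. The fix is easy and in the spirit of your argument: in Step~1, after choosing $B_v$, Zariski-shrink $\Spec B_v$ once more so that $p_v$ is the \emph{unique} point above $v$ (possible since the fiber over $v$ is finite \'etale, hence discrete). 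Then the off-diagonal part of every self- and mixed overlap maps into $\Spec\calO'$, where your observation that the gluing factors through $x$ applies, and the cocycle condition becomes automatic as you claim. With that one-sentence repair, your proof is correct and amounts to a fully spelled-out version of the paper's argument.
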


\begin{proof}
Since $\calX$ is of finite presentation over $\calO$,
the element $x_v$ comes from an element $\widetilde{x}_v$
of some finitely generated $\calO$-subalgebra $A_v$ of $\calO_{v,h}$.
The schemes $\Spec A_v$ together with $\Spec \calO'$
form an fppf covering of $\Spec \calO$,
so the stack property of $\calX$ shows that $x$ and the $\widetilde{x}_v$
come from an element of $\calX(\calO)$.
\end{proof}

\begin{remark}
Inspired by an earlier draft of our article, 
Christensen has found a natural way to define a topology on 
the set of adelic points of a finite-type algebraic stack,
and has proved a strong approximation theorem for a stacky curve
with $\chi > 1$ \cite{Christensen2020-thesis}*{Theorem~13.0.6}.
His argument can substitute for the three paragraphs 
before Lemma~\ref{L:last} and hence give a partially independent
proof of Theorem~\ref{T:teardrop}.
\end{remark}

\section*{Acknowledgements} 
 
We thank Johan de Jong, Martin Olsson, Ashvin Swaminathan, Martin Ulirsch, John Voight,
and David Zureick-Brown for discussions.

\begin{bibdiv}
\begin{biblist}


\bib{Abramovich-Graber-Vistoli2008}{article}{
  author={Abramovich, Dan},
  author={Graber, Tom},
  author={Vistoli, Angelo},
  title={Gromov-Witten theory of Deligne-Mumford stacks},
  journal={Amer. J. Math.},
  volume={130},
  date={2008},
  number={5},
  pages={1337--1398},
  issn={0002-9327},
  review={\MR {2450211}},
  doi={10.1353/ajm.0.0017},
}

\bib{Bhargava2013hyper}{misc}{
author={Bhargava, Manjul},
title={Most hyperelliptic curves over $\mathbb{Q}$ have no rational points}, 
date={2013-08-02},
note={Preprint, \texttt{arXiv:1308.0395v1}},
}      

\bib{Christensen2020-thesis}{book}{
  author={Christensen, Atticus},
  title={A topology on points on stacks},
  date={2020-04-28},
  note={Ph.D.\ thesis, Massachusetts Institute of Technology},
  pages={55},
}

\bib{Conrad2005preprint}{misc}{
  author={Conrad, Brian},
  title={The Keel--Mori theorem via stacks},
  date={2005-11-27},
  note={Unpublished manuscript, available at \url{http://math.stanford.edu/~conrad/papers/coarsespace.pdf}},
}

\bib{Conrad-Temkin2009}{article}{
  author={Conrad, Brian},
  author={Temkin, Michael},
  title={Non-Archimedean analytification of algebraic spaces},
  journal={J. Algebraic Geom.},
  volume={18},
  date={2009},
  number={4},
  pages={731--788},
  issn={1056-3911},
  review={\MR {2524597}},
  doi={10.1090/S1056-3911-09-00497-4},
}

\bib{Darmon-Granville1995}{article}{
  author={Darmon, Henri},
  author={Granville, Andrew},
  title={On the equations $z\sp m=F(x,y)$ and $Ax\sp p+By\sp q=Cz\sp r$},
  journal={Bull. London Math. Soc.},
  volume={27},
  date={1995},
  number={6},
  pages={513\ndash 543},
  issn={0024-6093},
  review={\MR {1348707 (96e:11042)}},
}

\bib{Keel-Mori1997}{article}{
   author={Keel, Se\'{a}n},
   author={Mori, Shigefumi},
   title={Quotients by groupoids},
   journal={Ann. of Math. (2)},
   volume={145},
   date={1997},
   number={1},
   pages={193--213},
   issn={0003-486X},
   review={\MR{1432041}},
   doi={10.2307/2951828},
}

\bib{Kobin-preprint}{misc}{
  author={Kobin, Andrew},
  title={Artin--Schreier root stacks},
  date={2020-04-22},
  note={Preprint, \texttt{arXiv:1910.03146v2}},
}

\bib{Lind1940}{article}{
  author={Lind, Carl-Erik},
  title={Untersuchungen \"uber die rationalen Punkte der ebenen kubischen Kurven vom Geschlecht Eins},
  language={German},
  journal={Thesis, University of Uppsala,},
  volume={1940},
  date={1940},
  pages={97},
  review={\MR {0022563 (9,225c)}},
}

\bib{Olsson2016}{book}{
   author={Olsson, Martin},
   title={Algebraic spaces and stacks},
   series={American Mathematical Society Colloquium Publications},
   volume={62},
   publisher={American Mathematical Society, Providence, RI},
   date={2016},
   pages={xi+298},
   isbn={978-1-4704-2798-6},
   review={\MR{3495343}},
   doi={10.1090/coll/062},
}

\bib{Poonen2010-local-global}{article}{
  author={Poonen, B.}, 
  title={Curves over every global field violating the local-global principle},
  language={English, with English and Russian summaries},
  journal={Zap. Nauchn. Sem. S.-Peterburg. Otdel. Mat. Inst. Steklov. (POMI)},
  volume={377},
  date={2010},
  number={Issledovaniya po Teorii Chisel. 10},
  pages={141--147, 243--244},
  issn={0373-2703},
  translation={ journal={J. Math. Sci. (N.Y.)}, volume={171}, date={2010}, number={6}, pages={782--785}, issn={1072-3374}, },
  review={\MR {2753654}},
  doi={10.1007/s10958-010-0182-9},
}

\bib{Reichardt1942}{article}{
  author={Reichardt, Hans},
  title={Einige im Kleinen \"uberall l\"osbare, im Grossen unl\"osbare diophantische Gleichungen},
  language={German},
  journal={J. Reine Angew. Math.},
  volume={184},
  date={1942},
  pages={12--18},
  issn={0075-4102},
  review={\MR {0009381 (5,141c)}},
}

\bib{StacksProject}{misc}{
  author={The Stacks Project authors},
  title={Stacks project},
  date={2020-05-18},
  note={Available at \url {http://stacks.math.columbia.edu}\phantom {i}},
  label={SP},
}

\bib{Voight-Zureick-Brown-preprint}{misc}{
  author={Voight, John},
  author={Zureick-Brown, David},
  title={The canonical ring of a stacky curve},
  date={2019-02-20},
  note={Preprint, \texttt {arXiv:1501.04657v3}, to appear in Mem.\ Amer.\ Math.\ Soc.},
}

\end{biblist}
\end{bibdiv}

\end{document}